\numberwithin{equation}{section}
\theoremstyle{plain}
\newtheorem{Th}{Theorem}[section]
\newtheorem{Prop}[Th]{Proposition}
 \theoremstyle{definition}
\newtheorem{Def}[Th]{Definition}
\newtheorem{Rmk}[Th]{Remark}
\newtheorem{?}[Th]{Problem}
\begin{document}

\title[The Euler Class from a General Connection, Relative to a Metric]{The Euler Class from a General Connection, Relative to a Metric}

\author{Brian Klatt}

\address{Rutgers University \\ Department of Mathematics \\
New Brunswick, NJ \\ United States} 

\email{brn.kltt@gmail.com}

 \subjclass[2010]{Primary: 53C05}

 \keywords{Euler class, Gauss-Bonnet}

\begin{abstract} We extend the well-known formula for the Euler class of a real oriented even-dimensional vector bundle in terms of the curvature of a metric connection to the case of a general linear connection provided a metric is present. We rewrite the classical Gauss-Bonnet theorem in dimension two in light of this formula. We also discuss a potential application to a conjecture of Chern, and make a brief digression to discuss $m$-quasi-Einstein manifolds.
\end{abstract}

\maketitle

\section{Introduction}

The Gauss-Bonnet theorem is one of the most remarkable and beautiful theorems in geometry, and in all of mathematics. It states (see e.g. \cite{Lee}) that if $(M^2, g)$ is a closed oriented surface with metric $g$ and Gauss curvature $K_g$ then
$$\int_{M^2}\,K_g\,\mathrm{d}V=2\pi\chi(M^2)$$
where $\chi$ denotes the Euler characteristic. Thus, knowing a single geometric quantity, the total Gauss curvature, completely determines the topology of the surface; or alternatively, knowing the topology restricts the range of possibilities for a metric on the surface.

This theorem was generalized by Allendoerfer \& Weil to the case of a closed oriented $2m$-dimensional Riemannian manifold \cite{AllenWeil}, and given a profoundly simple intrinsic proof shortly after by Chern \cite{Chern}. Their result is that
$$\int_{M^{2m}}\, \mathrm{Pf}(\Omega) = (2\pi)^m\chi(M^{2m})$$
where $\Omega=\Omega^i{}_j$ is the endomorphism-valued curvature $2$-form and $$\mathrm{Pf}(\Omega)=\frac{1}{2^m m!}\displaystyle\sum_{\sigma\in S_{2m}} (-1)^{\sigma}\Omega^{\alpha_{\sigma(1)}}{}_{\alpha_{\sigma(2)}}\wedge ... \wedge \Omega^{\alpha_{\sigma(2m-1)}}{}_{\alpha_{\sigma(2m)}}$$
is a well-defined $2m$-form when computed in an oriented orthonormal basis, called the Pfaffian form.

In fact, this form makes perfect sense if $\Omega$ is the curvature $2$-form of a connection compatible with a fibre metric $g$ on an oriented rank-$2k$ vector bundle $E$, where the sum defining the Pfaffian is now taken over $S_{2k}$. We then have the well-known theorem (which we will refer to simply as the Chern-Gauss-Bonnet theorem) that
$$(2\pi)^k e(E)=\mathrm{Pf}(\Omega)=\frac{1}{2^k k!}\displaystyle\sum_{\sigma\in S_{2k}} (-1)^{\sigma}\Omega^{\alpha_{\sigma(1)}}{}_{\alpha_{\sigma(2)}}\wedge ... \wedge \Omega^{\alpha_{\sigma(2k-1)}}{}_{\alpha_{\sigma(2k)}}$$
in the cohomology ring of $M$, where $e(E)$ is the Euler class, defined as the pullback of the Thom class of $E$ by the zero section (see \cite{Bell} for a statement and proof).

It is natural to wonder whether the Euler class can be computed from a general (i.e. not necessarily metric) connection, but a famed inequality of Milnor (\cite{Milnor}, but see also \cite{Goldman}) implies that there are flat oriented plane bundles with nonzero Euler class over a surface. Thus the Euler class of such a bundle cannot be computed purely from its curvature. This does not, however, rule out the possibility of a natural formula for the Euler class from a general connection; there may exist such a formula, provided it includes non-curvature terms. It is the primary purpose of this paper to state and prove just such a formula. This we accomplish in sections 2 and 4; section 3 contains some preliminaries to the proof on the Pfaffian of matrices.

We then, in section 5, record the global and local Gauss-Bonnet formulas in light of our general formula.

Section 6 contains miscellanea related to the main theorem, in two subsections. The first part calls attention to a potential application to a conjecture of Chern on the Euler characteristic of affine manifolds.  The second is an invitation to further research on ``generalized $m$-quasi-Einstein manifolds.'' This is related to the main theorem by happenstance; it is through this topic that the author was led to discover the main theorem (in particular, by thinking about open problem (vi) of \cite{Cao}).

Finally section 7 is an appendix in two parts: The first serves to fix notations and contains some standard formulas of differential geometry so as to avoid cluttering the flow of argumentation in the main part of the paper, and the second is an extensive compendium of geometric quantities associated to a particular class of tangent bundle connections. 

\section{Statement of the Formula}

Our setup will be a general linear connection $\nabla$ and fibre metric $g$ on an oriented rank-$2k$ real vector bundle $E$. The simplest way to realize a formula of the kind we desire is to find the correct ``error term" to insert into $(2\pi)^k e(E)=\mathrm{Pf}(\Omega)$, such that the error term vanishes when the connection $\nabla$ is metric. Thus the obvious building block for our error term is $\nabla g$. This is a symmetric matrix of $1$-forms with lower bundle indices: $\nabla g=\nabla_i g_{\alpha\beta}$. We can naturally construct from this an object of the same tensorial type as $\Omega=\Omega^{\alpha}{}_{\beta}=\Omega_{ij}{}^{\alpha}{}_{\beta}$ by using the inverse metric to first construct $g^{-1}\nabla g=(g^{-1}\nabla g)_i{}^{\alpha}{}_{\beta}=g^{\alpha \gamma}\nabla_i g_{\gamma \beta}$ and then squaring this matrix, using the wedge product to multiply the one-form entries: 

\begin{Def}
The endomorphism-valued two-form $(g^{-1}\nabla g)^2$ is defined by 
\begin{align*}
(g^{-1}\nabla g)^2&=(g^{-1}\nabla g)^2)_{ij}{}^{\alpha}{}_{\beta}\\
&=(g^{-1}\nabla g)_i{}^{\alpha}{}_{\gamma}\wedge (g^{-1}\nabla g)_j{}^{\gamma}{}_{\beta}\\
&=g^{\alpha \gamma}\nabla_i g_{\gamma \delta}\,g^{\delta \epsilon}\nabla_j g_{\epsilon \beta}-g^{\alpha \gamma}\nabla_j g_{\gamma \delta}\,g^{\delta \epsilon}\nabla_i g_{\epsilon \beta}
\end{align*}
\end{Def}

With these considerations in mind, one can do explicit computations with a simple non-metric tangent bundle connection like, say, $\Gamma_i{}^k{}_j=\overline{\Gamma}_i{}^k{}_j+a\,\varphi_i\delta^k_j+b\,\varphi_j\delta^k_i+c\,\varphi^kg_{ij}$ ($\overline{\Gamma}_i{}^k{}_j$ are the Christoffel symbols for the Levi-Civita connection $\nabla_g$ of $g$ and $\varphi$ is a $1$-form) to guess the error term, which leads to our main theorem.

\begin{Th}\label{RCGB}
Let $E$ be an oriented rank-$2k$ real vector bundle with connection $\nabla$, curvature $\Omega=\Omega_{\nabla}$, and fibre metric $g$. Then

$$(2\pi)^k e(E)=\mathrm{Pf}(\Omega-\tfrac{1}{4}(g^{-1}\nabla g)^2)$$
\vspace{.05in}

\noindent where $e(E)$ is the Euler class of the bundle, and the Pfaffian is computed using any oriented orthonormal basis for the metric $g$.
\end{Th}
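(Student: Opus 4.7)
The plan is to reduce to the classical Chern--Gauss--Bonnet theorem by producing from $\nabla$ and $g$ a genuine metric connection $\tilde\nabla$ and then showing that the Pfaffian of $\Omega - \tfrac{1}{4}(g^{-1}\nabla g)^2$ coincides with the Pfaffian of the curvature of $\tilde\nabla$. Writing $A := g^{-1}\nabla g$, I would take $\tilde\nabla$ to be the connection with local connection matrix $\tilde\omega := \omega + \tfrac{1}{2}A$, where $\omega$ is the connection matrix of $\nabla$ in a chosen local frame. A direct check exploiting the symmetry of $\nabla g$ in its bundle indices (which follows from the symmetry of $g$) verifies that $\tilde\nabla g = 0$, so $\tilde\nabla$ is metric.

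The structure equation applied to $\tilde\omega = \omega + \tfrac{1}{2}A$ yields, after gathering terms,
\begin{equation*}
\Omega - \tfrac{1}{4}A^2 \;=\; \tilde\Omega - \tfrac{1}{2}\tilde\nabla A,
\end{equation*}
where $\tilde\nabla A = dA + \tilde\omega \wedge A + A \wedge \tilde\omega$ is the covariant derivative of the endomorphism-valued $1$-form $A$ with respect to $\tilde\nabla$. I would then pass to an oriented orthonormal frame, in which three symmetry facts hold: $\tilde\Omega^\alpha{}_\beta$ is antisymmetric in $\alpha\beta$ (standard for any metric connection), $A^\alpha{}_\beta$ is symmetric in $\alpha\beta$ (since the symmetric part of $\omega$ in an orthonormal frame is exactly $-\tfrac{1}{2}A$), and consequently $(\tilde\nabla A)^\alpha{}_\beta$ is symmetric in $\alpha\beta$ by a short index shuffle combining the antisymmetry of $\tilde\omega$ with the symmetry of $A$. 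Thus in an oriented orthonormal frame, the antisymmetric part of $\Omega - \tfrac{1}{4}A^2$ equals $\tilde\Omega$ outright.

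The last step invokes what I expect to be the key lemma of Section~3: for any $2k\times 2k$ matrix $M$ of $2$-forms, the quantity $\mathrm{Pf}(M)$ depends only on the antisymmetric part $\tfrac{1}{2}(M^\alpha{}_\beta - M^\beta{}_\alpha)$ of $M$. Granting this, the symmetric summand $-\tfrac{1}{2}\tilde\nabla A$ is invisible to $\mathrm{Pf}$, so
\begin{equation*}
\mathrm{Pf}\bigl(\Omega - \tfrac{1}{4}(g^{-1}\nabla g)^2\bigr) \;=\; \mathrm{Pf}(\tilde\Omega),
\end{equation*}
and the classical Chern--Gauss--Bonnet theorem applied to the metric connection $\tilde\nabla$ identifies the right-hand side with $(2\pi)^k e(E)$ in the de Rham cohomology of the base, completing the proof.

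The main obstacle is the Pfaffian lemma itself: one must show that every term in the permutation expansion of $\mathrm{Pf}(M)$ containing a factor drawn from the symmetric part of $M$ cancels against its partner under the involution that swaps the two indices in a single paired block of the permutation. Pushing this cleanly through for arbitrary $k$ is the combinatorial content that I expect Section~3 to deliver; once it is in hand, the rest of the argument is routine curvature algebra and a short symmetry check in an oriented orthonormal frame.
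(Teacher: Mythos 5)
Your proposal is correct and is essentially the paper's own argument in slightly different notation: your $\tilde\nabla$ with matrix $\omega+\tfrac{1}{2}A$ is exactly the paper's $\nabla^g$ (whose matrix in an orthonormal frame is $\omega_A$, since $\omega_S=-\tfrac{1}{2}A$ there), your identity $\Omega-\tfrac{1}{4}A^2=\tilde\Omega-\tfrac{1}{2}\tilde\nabla A$ is the paper's splitting $\Omega-\omega_S\wedge\omega_S=(\mathrm{d}\omega_A+\omega_A\wedge\omega_A)+\Omega_S$, and the Pfaffian lemma you anticipate is precisely Proposition 3.2(3). No gaps.
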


\begin{Rmk}
Theorem~\ref{RCGB} brings attention to an interesting class of connections $\nabla$ with respect to a given metric $g$: those satisfying $(g^{-1}\nabla g)^2=0$. This condition is strictly weaker than metric compatibility because it is satisfied if (but not only if), for example, $\nabla_i g_{\alpha \beta}=\varphi_i g_{\alpha \beta}$; on the tangent bundle, this includes Weyl connections $\nabla=\nabla_g-\tfrac{\varphi_i}{2} \delta^k_j -\tfrac{\varphi_j}{2}\delta^k_i+\tfrac{\varphi^k}{2} g_{ij}$ \cite{PedSwann}. Therefore for Weyl connections, and any connection such that $(g^{-1}\nabla g)^2=0$, one can compute the Euler class directly from the curvature without any error term via $(2\pi)^k e(E)=\mathrm{Pf}(\Omega)$.
\end{Rmk}

\section{Preliminary for the Proof}
We only need a brief interlude on properties of the Pfaffian before we can proceed to the proof. (For further reference see Volume V of \cite{Spivak}.)

\begin{Def}
Let $M_1$, ..., $M_k$ be $2k$-by-$2k$ matrices with entries in a commutative algebra $R$ over $\mathbb{Q}$. We define the Pfaffian of these matrices by
$$\mathrm{Pf}(M_1, ..., M_k)=\frac{1}{2^k k!}\displaystyle\sum_{\sigma\in S_{2k}} (-1)^{\sigma}(M_1)^{i_{\sigma(1)}}_{i_{\sigma(2)}} ... (M_k)^{i_{\sigma(2k-1)}}_{i_{\sigma(2k)}}$$
By setting all $M_i=M$ we recover $\mathrm{Pf}(M)$, the usual Pfaffian of $M$.
\end{Def}

\noindent The Pfaffian is obviously multilinear with respect to matrix addition and multiplication of matrices by elements of $R$. We also have the following

\begin{Prop}\label{Pf} The Pfaffian satisfies the following properties:

\begin{enumerate} 
\item $\mathrm{Pf}(M_1, ..., (M_i)^T, ..., M_k)=-\mathrm{Pf}(M_1,..., M_i, ..., M_k)$
\item $\mathrm{Pf}(M_1, ..., M_i, ..., M_k)=\mathrm{Pf}(M_1,..., (M_i)_A,..., M_k)$ where $(M_i)_A=\frac{1}{2}(M_i-(M_i)^T)$
\item $\mathrm{Pf}(M_1, ..., M_k)=\mathrm{Pf}((M_1)_A, ..., (M_k)_A)$ and $\mathrm{Pf}(M)=\mathrm{Pf}(M_A)$
\end{enumerate}
\end{Prop}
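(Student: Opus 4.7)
The plan is to prove (1) directly from the definition by a permutation substitution, then deduce (2) by multilinearity, and finally deduce (3) by iterating (2).

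For (1), the idea is that replacing $M_i$ with $(M_i)^T$ swaps the indices $i_{\sigma(2i-1)}$ and $i_{\sigma(2i)}$ in the $i$-th factor of each term of the defining sum. I would absorb this swap into the permutation by making the substitution $\sigma \mapsto \sigma \circ \tau$ where $\tau$ is the transposition $(2i-1,\,2i)$. Since $\tau$ is a single transposition, $(-1)^{\sigma \circ \tau} = -(-1)^{\sigma}$, and under the substitution the $i$-th factor $(M_i)^{i_{\sigma(2i)}}_{i_{\sigma(2i-1)}}$ becomes $(M_i)^{i_{\sigma(2i-1)}}_{i_{\sigma(2i)}}$, while the remaining factors are unchanged because $\tau$ fixes all other indices. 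This recovers the original sum up to an overall minus sign. The one point requiring care is that we are using commutativity of $R$ to leave the order of the factors alone; this is legitimate since the hypothesis is that $R$ is commutative (and in the intended application the entries are $2$-forms, which commute in the exterior algebra).

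For (2), I would simply use multilinearity: $(M_i)_A = \tfrac{1}{2}M_i - \tfrac{1}{2}(M_i)^T$, so
\begin{align*}
\mathrm{Pf}(M_1,\ldots,(M_i)_A,\ldots,M_k)
&= \tfrac{1}{2}\mathrm{Pf}(M_1,\ldots,M_i,\ldots,M_k) - \tfrac{1}{2}\mathrm{Pf}(M_1,\ldots,(M_i)^T,\ldots,M_k)\\
&= \tfrac{1}{2}\mathrm{Pf}(M_1,\ldots,M_i,\ldots,M_k) + \tfrac{1}{2}\mathrm{Pf}(M_1,\ldots,M_i,\ldots,M_k)
\end{align*}
by (1), which equals $\mathrm{Pf}(M_1,\ldots,M_i,\ldots,M_k)$. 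For the first half of (3), I would apply (2) to each slot in turn ($i = 1, 2, \ldots, k$), each step leaving the Pfaffian unchanged; and the second half follows from the first by setting all $M_i = M$, giving $\mathrm{Pf}(M) = \mathrm{Pf}(M,\ldots,M) = \mathrm{Pf}(M_A,\ldots,M_A) = \mathrm{Pf}(M_A)$.

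The only real obstacle is a clean bookkeeping of indices in (1); once that substitution is set up correctly, (2) and (3) are essentially immediate consequences of linearity.
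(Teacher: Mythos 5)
Your proposal is correct and follows essentially the same route as the paper: property (1) by absorbing the transposition into the permutation, property (2) by multilinearity together with (1) (you expand $(M_i)_A=\tfrac{1}{2}(M_i-(M_i)^T)$ directly, whereas the paper splits $M_i=(M_i)_A+(M_i)_S$ and kills the symmetric contribution, but these are the same computation), and property (3) by iterating (2) over the slots. Your extra care with the index substitution in (1) only fills in what the paper dismisses as obvious.
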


\begin{proof}

\begin{enumerate}
\item This is obvious from the definition, since a transposition has sign equal to $-1$.
\item With $(M_i)_A=\frac{1}{2}(M_i-(M_i)^T)$ and $(M_i)_S=\frac{1}{2}(M_i+(M_i)^T)$, compute 
\begin{align*}
\mathrm{Pf}(M_1, ..., M_i, ..., M_k)&=\mathrm{Pf}(M_1, ..., (M_i)_A+(M_i)_S, ..., M_k)\\
&=\mathrm{Pf}(M_1, ..., (M_i)_A, ..., M_k)+\mathrm{Pf}(M_1, ..., (M_i)_S, ..., M_k)
\end{align*}
\noindent and note by (1) that 
\begin{align*}
\mathrm{Pf}(M_1, ..., (M_i)_S, ..., M_k)&=\mathrm{Pf}(M_1, ..., ((M_i)_S)^T, ..., M_k)\\&
=-\mathrm{Pf}(M_1, ..., (M_i)_S, ..., M_k)
\end{align*}
\noindent so $$\mathrm{Pf}(M_1, ..., (M_i)_S, ..., M_k)=0,$$ and (2) follows.
\item Use induction on (2).
\end{enumerate}
\end{proof}

\section{Proof of the Main Theorem and Developments}

We are now ready for the proof of Thm.~\ref{RCGB}.

\subsection{Proof of Theorem \ref{RCGB}}

\begin{proof}
Let $\{E_{\alpha}\}$ be a local oriented orthonormal basis of $E$, and $\omega=\omega^{\alpha}{}_{\beta}=\omega_i{}^{\alpha}{}_{\beta}$ and $\Omega=\Omega^{\alpha}{}_{\beta}=\Omega_{ij}{}^{\alpha}{}_{\beta}$ be the connection and curvature matrices, respectively, in this frame. We split the matrices of forms
$$\omega=\omega_A+\omega_S$$
and 
$$\Omega=\Omega_A+\Omega_S$$
into antisymmetric (subscript ``A'') and symmetric (subscript ``S'') parts.

\noindent One can readily check that
$$\Omega_A=\mathrm{d}\omega_A+\omega_A\wedge\omega_A+\omega_S\wedge\omega_S$$
and
$$\Omega_S=\mathrm{d}\omega_S+\omega_A\wedge\omega_S+\omega_S\wedge\omega_A.$$

\noindent We also compute in our local orthonormal frame that

$$g^{-1}\nabla g=\delta^{\alpha \gamma}(\mathrm{d}\delta_{\gamma \beta}-\omega^{\epsilon}{}_{\gamma}\delta_{\epsilon \beta}-\omega^{\epsilon}{}_{\beta}\delta_{\gamma \epsilon})=-(\omega^{\alpha}{}_{\beta}+\omega^{\beta}{}_{\alpha})=-2\,\omega_S$$
\vspace{.01in}

\noindent which immediately implies
$$\tfrac{1}{4}(g^{-1}\nabla g)^2=\omega_S\wedge\omega_S.$$
Therefore
$$\mathrm{Pf}(\Omega-\tfrac{1}{4}(g^{-1}\nabla g)^2)=\mathrm{Pf}(\mathrm{d}\omega_A+\omega_A\wedge\omega_A+\Omega_S)=\mathrm{Pf}(\mathrm{d}\omega_A+\omega_A\wedge\omega_A)$$
where we neglected the symmetric part in the last equality by (3) of Prop.~\ref{Pf}.

All that remains of the proof is to show that $\omega_A$ defines a metric connection, for then
$$\mathrm{Pf}(\Omega-\tfrac{1}{4}(g^{-1}\nabla g)^2)=\mathrm{Pf}(\mathrm{d}\omega_A+\omega_A\wedge\omega_A)=\mathrm{Pf}(\Omega_{\omega_A})=(2\pi)^k e(E)$$ by the Chern-Gauss-Bonnet theorem. This follows easily from
$$\omega_A=\omega-\omega_S=\omega+\tfrac{1}{2}g^{-1}\nabla g.$$
In fact, define $\nabla^g=\nabla+\frac{1}{2}g^{-1}\nabla g$. This is clearly a connection with connection matrix $\omega_A$ in our local orthonormal frame, and since this matrix is antisymmetric, it's a metric connection. This completes the proof.
\end{proof}

\subsection{The Canonical Associated Metric Connection}

We record an observation from the proof.

\begin{Def} 
Suppose $E$ is a real vector bundle with linear connection $\nabla$ and fibre metric $g$. Then we call
$$\nabla^g=\nabla+\tfrac{1}{2}g^{-1}\nabla g$$

\vspace{.05in}
\noindent the \emph{canonical g-metric connection associated to} $\nabla$.
\end{Def}

\begin{Prop}
$\nabla^g$ is indeed a metric connection with respect to $g$.
\end{Prop}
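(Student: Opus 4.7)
The plan is to verify $\nabla^g g = 0$, which can be done either coordinate-freely via a one-line index calculation, or by invoking the local-frame computation already performed in the proof of Theorem~\ref{RCGB}. Both are short, and I would probably include both since they illuminate different aspects.

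For the direct approach, I would extend $\nabla^g$ to sections of $E^* \otimes E^*$ by the Leibniz rule, so that for the fibre metric $g = g_{\alpha\beta}$ one has
\[
\nabla^g_i g_{\alpha\beta} = \nabla_i g_{\alpha\beta} - \tfrac{1}{2}(g^{-1}\nabla g)_i{}^{\gamma}{}_{\alpha}\, g_{\gamma\beta} - \tfrac{1}{2}(g^{-1}\nabla g)_i{}^{\gamma}{}_{\beta}\, g_{\alpha\gamma},
\]
the minus signs appearing because $\nabla^g$ acts on the lower indices through the dual connection. Using the definition $(g^{-1}\nabla g)_i{}^{\gamma}{}_{\alpha} = g^{\gamma\delta}\nabla_i g_{\delta\alpha}$, each error term collapses via $g^{\gamma\delta}g_{\gamma\beta} = \delta^\delta_\beta$ and the symmetry $g_{\alpha\beta} = g_{\beta\alpha}$ to exactly $\tfrac{1}{2}\nabla_i g_{\alpha\beta}$. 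The three contributions then cancel identically, giving $\nabla^g g = 0$.

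For the second approach, one simply points back to the proof of Theorem~\ref{RCGB}: in a local oriented orthonormal frame for $g$, we computed $g^{-1}\nabla g = -2\omega_S$, so the connection matrix of $\nabla^g = \nabla + \tfrac{1}{2}g^{-1}\nabla g$ in that frame is $\omega - \omega_S = \omega_A$, which is antisymmetric. Since antisymmetry of the connection matrix in an orthonormal frame is equivalent to metric compatibility, we are done.

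There is no real obstacle here; the only thing to be careful about is the sign convention when extending $\nabla^g$ from $E$ to $E^* \otimes E^*$, and the observation that the two error terms in the Leibniz expansion are equal (by symmetry of $g$), so they combine to cancel $\nabla_i g_{\alpha\beta}$ precisely with the factor $\tfrac{1}{2}$ built into the definition of $\nabla^g$. This cancellation is in fact what motivates the coefficient $\tfrac{1}{2}$ in the definition.
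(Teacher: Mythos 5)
Your first computation is exactly the paper's proof: the paper expands $\nabla^g g$ in an arbitrary local frame using the connection forms $\eta^{\alpha}{}_{\beta}=\omega^{\alpha}{}_{\beta}+\tfrac{1}{2}g^{\alpha\gamma}\nabla g_{\gamma\beta}$ and observes the same cancellation, each correction term collapsing to $\tfrac{1}{2}\nabla g_{\alpha\beta}$ by symmetry of $g$. Your second route (antisymmetry of $\omega_A$ in an orthonormal frame) is also valid, but note the paper deliberately avoids it here, stating that it wants a proof independent of the argument for Theorem~\ref{RCGB}.
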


\noindent We give a proof independent of our work in the proof of Thm~\ref{RCGB}.

\begin{proof}
Choose an arbitrary local frame $\{E_{\alpha}\}$ and let $\omega^{\alpha}{}_{\beta}$ be the matrix of connection $1$-forms of $\nabla$. Then the connection $1$-forms of $\nabla^g$ are $\eta^{\alpha}{}_{\beta}=\omega^{\alpha}{}_{\beta}+\frac{1}{2}g^{\alpha \gamma}\nabla g_{\gamma \beta}$, and we can simply compute
\begin{align*}
\nabla^g g&=\mathrm{d}g_{\alpha \beta}-\eta^{\gamma}{}_{\alpha}g_{\gamma \beta}-\eta^{\gamma}{}_{\beta}g_{\alpha \gamma}\\
&=\mathrm{d}g_{\alpha \beta}-(\omega^{\gamma}{}_{\alpha}+\tfrac{1}{2}g^{\gamma \delta}\nabla g_{\delta \alpha})g_{\gamma \beta}-(\omega^{\gamma}{}_{\beta}+\tfrac{1}{2}g^{\gamma \delta}\nabla g_{\delta \beta})g_{\alpha \gamma}\\
&=\mathrm{d}g_{\alpha \beta}-\omega^{\gamma}{}_{\alpha}g_{\gamma \beta}-\omega^{\gamma}{}_{\beta}g_{\alpha \gamma}-\nabla g_{\alpha \beta}\\
&=0
\end{align*}
\end{proof}

In brief, then, Thm.~\ref{RCGB} tells us that to compute the Euler class from $\nabla$ in the presence of a metric $g$, one should compute the Pfaffian form of the canonical metric connection induced by $\nabla$ and $g$.

Our next proposition gives an alternative characterization of the connection $\nabla^g$ as the connection nearest to $\nabla$, at each point, in the affine subspace of $g$-metric connections. Thus $\nabla^g$ is a sort of orthogonal projection of $\nabla$ onto the space of $g$-metric connections.

\begin{Prop}\label{proj}
Let $E$ be a real vector bundle, $\nabla$ a linear connection, and $g$ a fibre metric. Also, arbitrarily fix a Riemannian metric $h$ on the underlying manifold, and define length in $T^*M\otimes E\otimes E^*$ with the metric $\mu=h^{-1}\otimes g\otimes g^{-1}$. If $\nabla'$ is a $g$-metric connection, then $|\nabla-\nabla^g|\leq |\nabla-\nabla'|$ with equality if and only if $\nabla'=\nabla^g$.
\end{Prop}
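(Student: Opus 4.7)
The plan is to do a pointwise linear-algebra minimization on the affine space of $g$-metric connections by splitting the difference tensor $\nabla' - \nabla$ into symmetric and antisymmetric parts that turn out to be $\mu$-orthogonal. First, since $|\nabla - \nabla'|_\mu$ and the condition that $\nabla'$ be $g$-metric are both pointwise, I fix a point and parametrize $\nabla'$ by $A := \nabla' - \nabla$, a section of $T^*M \otimes E \otimes E^*$. The Leibniz rule gives
$$(\nabla' g)_{i\alpha\beta} = (\nabla g)_{i\alpha\beta} - A_i{}^\gamma{}_\alpha\,g_{\gamma\beta} - A_i{}^\gamma{}_\beta\,g_{\gamma\alpha},$$
so setting $B_{i\alpha\beta} := g_{\alpha\gamma}\,A_i{}^\gamma{}_\beta$ (lowering the upper $E$-index), the $g$-metric condition $\nabla' g = 0$ becomes $B_{i(\alpha\beta)} = \tfrac{1}{2}(\nabla g)_{i\alpha\beta}$: the symmetric part of $B$ in its two $E$-indices is prescribed, while the antisymmetric part $B_{i[\alpha\beta]}$ is completely free.

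Next, I rewrite $|A|_\mu^2$ in terms of $B$. A short calculation using $g_{\alpha\gamma}\,g^{\alpha\epsilon}\,g^{\gamma\zeta} = g^{\epsilon\zeta}$ yields
$$|A|_\mu^2 = h^{ij}\,g^{\alpha\gamma}\,g^{\beta\delta}\,B_{i\alpha\beta}\,B_{j\gamma\delta},$$
so the two $E$-indices of $B$ are paired symmetrically via $g^{-1}\otimes g^{-1}$. With respect to such a pairing, the symmetric and antisymmetric parts of $B$ in those two indices are mutually orthogonal (a one-line relabeling of dummy indices, using that swapping $\alpha\leftrightarrow\beta$ and $\gamma\leftrightarrow\delta$ leaves $g^{\alpha\gamma}g^{\beta\delta}$ invariant while picking up a sign from the antisymmetric factor), giving the Pythagorean decomposition $|A|_\mu^2 = |B_{(\cdot\,\cdot)}|_\mu^2 + |B_{[\cdot\,\cdot]}|_\mu^2$.

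The first summand is fixed by the metric constraint and the second is nonnegative, so $|A|_\mu$ is minimized precisely when $B_{i[\alpha\beta]} = 0$. At this minimum $B_{i\alpha\beta} = \tfrac{1}{2}(\nabla g)_{i\alpha\beta}$, equivalently $A_i{}^\gamma{}_\beta = \tfrac{1}{2}(g^{-1}\nabla g)_i{}^\gamma{}_\beta$, which is exactly $\nabla^g - \nabla$. Thus $|\nabla - \nabla^g|_\mu \le |\nabla - \nabla'|_\mu$ with equality iff the antisymmetric part vanishes, iff $\nabla' = \nabla^g$. The only step requiring any care is the orthogonality in the Pythagorean splitting, which is where the particular form $\mu = h^{-1}\otimes g\otimes g^{-1}$ matters: pairing the two $E$-slots with the same tensor $g^{-1}$ is essential, and any asymmetric choice would spoil both the splitting and the conclusion.
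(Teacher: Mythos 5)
Your proof is correct and is essentially the same argument as the paper's: both rest on the observation that, under the pairing $\mu=h^{-1}\otimes g\otimes g^{-1}$, the $g$-symmetric and $g$-antisymmetric parts of the (index-lowered) difference tensor are orthogonal, so Pythagoras identifies the minimizer as the one with vanishing antisymmetric part, namely $\nabla+\tfrac12 g^{-1}\nabla g$. The only cosmetic differences are that the paper computes in $g$- and $h$-orthonormal frames and centers the decomposition at $\nabla^{g}$ (using that $\nabla'-\nabla^{g}$ is antisymmetric), whereas you work frame-free, center at $\nabla$, and thereby re-derive as a byproduct that $\nabla^{g}$ is a $g$-metric connection.
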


\begin{proof}
The basic point is that $\nabla^g-\nabla=\frac{1}{2}g^{-1}\nabla g$ is $g$-symmetric, which is orthogonal to the $g$-antisymmetric directions that define the metric connections.

To be precise, let $\nabla'$ be a $g$-metric connection. Then
$$|\nabla-\nabla'|^2=|(\nabla-\nabla^g)-(\nabla'-\nabla^g)|^2$$
We'll compute in a local $g$-orthonormal frame $\{E_{\alpha}\}$ of $E$ and a local $h$-orthonormal frame $\{e_i\}$ of $TM$, so $g_{\alpha \beta}=\delta_{\alpha \beta}$ and $h_{ij}=\delta_{ij}$. Denote by $\omega$ the matrix of connection $1$-forms. As discussed in the proof of Thm~\ref{RCGB}, $\nabla-\nabla^g=\omega_S$, a symmetric matrix of $1$-forms, while $\nabla'-\nabla^g=T_i{}^{\alpha}{}_{\beta}$ is an antisymmetric matrix of $1$-forms since each connection in the difference is metric. Now compute
$$\mu(\omega_S, T)=\displaystyle\sum_{i, \,\alpha, \,\beta}(\omega_S)_i{}^{\alpha}{}_{\beta} \,T_i{}^{\alpha}{}_{\beta}=-\displaystyle\sum_{i, \,\alpha, \,\beta}(\omega_S)_i{}^{\beta}{}_{\alpha} \,T_i{}^{\beta}{}_{\alpha}=-\mu(\omega_S, T)$$
Thus $\mu(\omega_S, T)=0$ and we find that
\begin{align*}
|\nabla-\nabla'|^2&=|(\nabla-\nabla^g)-(\nabla'-\nabla^g)|^2\\
&=|(\nabla-\nabla^g)|^2+|(\nabla'-\nabla^g)|^2\\
&=|\omega_S|^2+|(\nabla'-\nabla^g)|^2
\end{align*}
and so the conclusion clearly follows.
\end{proof}

\section{The Formula on the Tangent Bundle of a Surface}
We will now give Thm~\ref{RCGB} explicitly on the tangent bundle of a compact oriented surface, so as to compare it to the classical Gauss-Bonnet formula. Of course the integrands will differ by a divergence term; we will determine it exactly. We also give the corresponding local Gauss-Bonnet theorem.

\subsection{Global Gauss-Bonnet Theorem}
We begin by specializing our setup to the case where $E=TM$, and so $g$ is a Riemannian metric on $M$, with Levi-Civita connection $\nabla_g$. Given a connection $\nabla$ (not assumed to be metric or torsion-free) we have a ``right triangle'' (by Prop.~\ref{proj}) of connections with vertices $\nabla$, $\nabla^g$, and $\nabla_g$. If we write $\nabla=\nabla_g-D$ (so $D$ is the hypotenuse of the triangle), then straightforward computation from the definition of $\nabla^g$ shows $\nabla^g=\nabla+D_S$ and therefore $\nabla^g=\nabla_g-D_A$ where
$$D_S=\tfrac{1}{2}(D+g^{-1}D^Tg)=\tfrac{1}{2}(D_i{}^k{}_j+g^{kl}D_i{}^m{}_l g_{mj})$$
and
$$D_A=\tfrac{1}{2}(D-g^{-1}D^Tg)=\tfrac{1}{2}(D_i{}^k{}_j-g^{kl}D_i{}^m{}_l g_{mj})$$

Now we set $B=D_A$ and $H=\Omega(\nabla^g)$ for convenience, so we have in particular $\nabla^g=\nabla_g-B$. The difference tensor $B$ has one independent trace $B_i=B_k{}^k{}_i=B_{kki}=\tfrac{1}{2}(D_{jji}-D_{jij})$, and standard differential geometric calculations (see the appendix) reveal that
\begin{align*}
\mathrm{Pf}(H)&=(K_g-\mathrm{d}^*_g(B_i))\mathrm{d}V_g\\
&=(K_g-\tfrac{1}{2}\mathrm{d}^*_g(D_{jji}-D_{jij}))\mathrm{d}V_g
\end{align*}

\noindent which by Thm.~\ref{RCGB} yields the following

\begin{Prop}[Global Gauss-Bonnet Theorem]\label{globGB}
Suppose $\nabla$ is a connection with curvature $\Omega$ on the tangent bundle of a closed oriented surface $(M^2, g)$ with Levi-Civita connection $\nabla_g$, Gauss curvature $K_g$, and volume form $\mathrm{d}V_g$. If we write $\nabla=\nabla_g-D$ and $\nabla^g=\nabla_g-B$, then
\begin{align*}
2\pi\, e(TM^2)&=\mathrm{Pf}(\Omega-\tfrac{1}{4}(g^{-1}\nabla g)^2)\\
&=(K_g-\mathrm{d}^*_g(B_i))\mathrm{d}V_g\\
&=(K_g-\tfrac{1}{2}\mathrm{d}^*_g(D_{jji}-D_{jij}))\mathrm{d}V_g
\end{align*}
and so
$$\displaystyle\int_{M^2} (K_g-\tfrac{1}{2}\mathrm{d}^*_g(D_{jji}-D_{jij}))\mathrm{d}V_g=2\pi\,\chi(M^2)$$
\end{Prop}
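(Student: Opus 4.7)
The plan is to invoke Thm~\ref{RCGB} to reduce the problem to an explicit computation of $\mathrm{Pf}(\Omega_{\nabla^g})$ on a surface. From the argument in the proof of Thm~\ref{RCGB}, one has $\mathrm{Pf}(\Omega - \tfrac{1}{4}(g^{-1}\nabla g)^2) = \mathrm{Pf}(\Omega_{\nabla^g})$, so the whole task reduces to the pointwise identity
$$\mathrm{Pf}(\Omega_{\nabla^g}) = (K_g - \mathrm{d}^*_g(B_i))\,\mathrm{d}V_g$$
followed by an appeal to classical Gauss-Bonnet for $\int K_g\,\mathrm{d}V_g$.

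The first key observation is that both $\nabla^g$ and $\nabla_g$ are $g$-metric, so the difference $B = \nabla_g - \nabla^g$ is valued in $g$-antisymmetric endomorphisms of $TM^2$. In dimension two these form a line bundle, so in an oriented orthonormal frame the matrix of $B$ is encoded by a single local $1$-form $\beta$ sitting in the $(1,2)$-entry, and in particular $B \wedge B = 0$ because $\beta \wedge \beta = 0$. The general curvature formula $\Omega_{\nabla^g} = \Omega_{\nabla_g} - d^{\nabla_g} B + B \wedge B$ therefore collapses to $\Omega_{\nabla^g} = \Omega_{\nabla_g} - d^{\nabla_g} B$. Since the Pfaffian of a $2\times 2$ antisymmetric matrix of $2$-forms is just its $(1,2)$-entry, and $\mathrm{Pf}(\Omega_{\nabla_g}) = K_g\,\mathrm{d}V_g$ is the classical presentation of Gauss curvature via the Levi-Civita curvature form, I obtain
$$\mathrm{Pf}(\Omega_{\nabla^g}) = K_g\,\mathrm{d}V_g - d\beta.$$

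Next I would rewrite $d\beta$ invariantly. A direct index manipulation in the orthonormal frame identifies the trace $1$-form $B_i = B_k{}^k{}_i$ with the Hodge dual $*\beta$, up to sign; combining this with the dimension-two identities $*^2 = -1$ on $1$-forms and $\mathrm{d}^*_g = -*d*$ yields $\mathrm{d}^*_g(B_i)\,\mathrm{d}V_g = d\beta$, and hence the desired pointwise formula. The identity $B_i = \tfrac{1}{2}(D_{jji} - D_{jij})$ is then immediate from the definition $B = D_A = \tfrac{1}{2}(D - g^{-1}D^T g)$ upon tracing in an orthonormal frame. The integral statement falls out at once: on the closed surface $M^2$ the codifferential term integrates to zero (being locally exact, or equivalently by Stokes applied to the dual vector field), and the remaining integral of $K_g\,\mathrm{d}V_g$ equals $2\pi\chi(M^2)$ by classical Gauss-Bonnet.

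The step I expect to be most delicate is the Hodge-theoretic identification of the locally defined $d\beta$ with the intrinsic $\mathrm{d}^*_g(B_i)\,\mathrm{d}V_g$, together with the careful tracking of signs through the trace and the Hodge star. This is exactly the sort of ``standard differential geometric calculation'' that the author relegates to the appendix. Everything else reduces to linear algebra in dimension two, the perturbation formula for a connection's curvature, and direct appeals to Thm~\ref{RCGB} and classical Gauss-Bonnet.
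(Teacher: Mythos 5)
Your proposal is correct and follows essentially the same route as the paper: Theorem~\ref{RCGB} reduces everything to $\mathrm{Pf}(\Omega_{\nabla^g})$, and your computation in an oriented orthonormal frame (where $B\wedge B$ and the supercommutator with $\bar{\Gamma}$ vanish, so $H=\bar{R}-\mathrm{d}B$), together with the Hodge-star identification $b=-\star_1 B_i$, is exactly the derivation the paper carries out in its appendix. One tiny caution: ``locally exact'' alone would not force the codifferential term to integrate to zero (the volume form is also locally exact), but your alternative justification via the divergence theorem is fine, and in fact $\mathrm{d}^*_g(B_i)\,\mathrm{d}V_g=-\mathrm{d}(\star_1 B_i)$ is globally exact because $B_i$ is a globally defined $1$-form.
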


\vspace{.1in}

\subsection{Local Gauss-Bonnet Theorem}
\noindent Considering the divergence theorem with inward-pointing normal $N$,
$$\int_D -\mathrm{d}^*_g(B_i)\mathrm{d}V_g=\int_D \mathrm{div}_g(B^i)\mathrm{d}V_g=-\int_{\partial D} g_{ij} B^i N^j \mathrm{d}s=-\int_{\partial D} B_i(N) \mathrm{d}s$$
the following proposition is the natural local version of the above Global Gauss-Bonnet theorem (we borrow some phrasing and notation from Theorem 9.3 of \cite{Lee}).

\begin{Prop}[Local Gauss-Bonnet Theorem]\label{locGB}
Suppose $\gamma$ is a curved polygon on an oriented surface $(M^2, g)$, positively oriented as the boundary of an open set $D$ with compact closure, with inward pointing normal $N$, and exterior angles $\epsilon_i$. Let $\overline{\nabla}$ be the Levi-Civita connection, $\kappa_N=g(\overline{\nabla}_{\dot{\gamma}}\dot{\gamma}, N)$ be the signed curvature of $\gamma$, and $B_i$ be a $1$-form. Then
\begin{equation}\label{locGB}
\int_D (K_g-\mathrm{d}^*_g(B_i)) \mathrm{d}A_g + \int_{\gamma} (\kappa_N+B_i(N)) \mathrm{d}s + \displaystyle\sum_i \epsilon_i = 2\pi
\end{equation}
If $\nabla'=\overline{\nabla}-B_i{}^k{}_j$ is a metric connection, i.e. $B_{ikj}=g_{km}B_i{}^m{}_j$ satisfies $B_{ikj}=-B_{ijk}$,  and $B_i=B_k{}^k{}_i=B_{kki}$, then $K_g-\mathrm{d}^*_g(B_i)=\mathrm{Pf}(\Omega_{\nabla'})$ is the ``Gauss curvature'' of $\nabla'$ and $\kappa_N+B_i(N)=g(\nabla'_{\dot{\gamma}}\dot{\gamma}, N)$ is the ``signed curvature of $\gamma$ with respect to $\nabla'$.'' If $\nabla=\overline{\nabla}-D_i{}^k{}_j$ is a general connection, then according to Thm.~\ref{RCGB} we apply this to the metric connection $\nabla^g=\overline{\nabla}-\tfrac{1}{2}(D_i{}^k{}_j-g^{kl}D_i{}^m{}_l g_{mj})$.
\end{Prop}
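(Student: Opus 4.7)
The proof splits naturally into three parts: deriving the integral identity for an arbitrary $1$-form $B_i$, establishing the two identifications with $\nabla'$-quantities in the metric case, and recording the application to a general connection $\nabla$ via $\nabla^g$.

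For the integral identity, the plan is to start from the classical local Gauss-Bonnet theorem for $(M^2, g)$ (e.g., Thm.~9.3 of \cite{Lee}),
$$\int_D K_g\, \mathrm{d}A_g + \int_\gamma \kappa_N\, \mathrm{d}s + \sum_i \epsilon_i = 2\pi,$$
and to observe that the extra $B_i$-terms in the statement contribute zero. Indeed, the divergence theorem with inward-pointing normal $N$, already displayed in the paragraph preceding the statement, yields
$$\int_D -\mathrm{d}^*_g(B_i)\, \mathrm{d}A_g = -\int_\gamma B_i(N)\, \mathrm{d}s,$$
so inserting $-\mathrm{d}^*_g(B_i)$ into the area integrand and $B_i(N)$ into the boundary integrand simultaneously adds zero to the classical identity, producing the stated formula.

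For the two identifications, assume $\nabla' = \ovl{\nabla} - B_i{}^k{}_j$ is a $g$-metric connection, so $B_{ikj} = g_{km}B_i{}^m{}_j$ is antisymmetric in $k, j$. The first identity, $(K_g - \mathrm{d}^*_g(B_i))\mathrm{d}A_g = \mathrm{Pf}(\Omega_{\nabla'})$, is exactly what Prop.~\ref{globGB} gives when applied with $\nabla = \nabla' = \nabla^g$ (so $D = B$, no further symmetrization needed). For the second identity, $\kappa_N + B_i(N) = g(\nabla'_{\dot\gamma}\dot\gamma, N)$, the plan is a direct pointwise computation in the oriented orthonormal frame $\{T, N\} = \{\dot\gamma, N\}$ along $\gamma$. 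Expanding $\nabla'_T T = \ovl\nabla_T T - B(T,T)$ and pairing with $N$ gives $g(B(T,T), N) = B_{121}$; antisymmetry in the last two indices gives $B_{121} = -B_{112}$, while $B_i(N) = B_2 = B_{112}$ since $B_{222} = 0$. Combining yields $g(\nabla'_T T, N) = \kappa_N + B_{112} = \kappa_N + B_i(N)$.

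Finally, for a general linear connection $\nabla = \ovl\nabla - D$, Thm.~\ref{RCGB} directs us to replace $\nabla$ by its canonical $g$-metric associate $\nabla^g = \ovl\nabla - D_A$, where $D_A$ is the $g$-antisymmetric part of $D$, and then to apply the preceding two identifications with $B = D_A$. The only real obstacle is careful bookkeeping of the sign conventions — in particular the minus from the inward-pointing normal and the antisymmetry of $B_{ikj}$ in the last two indices — but no geometric input is required beyond classical local Gauss-Bonnet, the divergence theorem, and Prop.~\ref{globGB}.
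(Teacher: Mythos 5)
Your proposal is correct and follows essentially the same route as the paper: the classical local Gauss--Bonnet theorem combined with the divergence theorem (and the corner-approximation argument) for the integral identity, the appendix formula $H_{1212}=K_g-\mathrm{d}^*_g(B_i)$ for the Gauss-curvature identification, and a pointwise computation in the orthonormal frame $\{\dot\gamma, N\}$ for the boundary term. The only cosmetic difference is that you verify $g(\nabla'_{\dot\gamma}\dot\gamma,N)=\kappa_N+B_i(N)$ by direct index manipulation using the antisymmetry $B_{ikj}=-B_{ijk}$, whereas the paper routes that same two-line computation through the Hodge-star identity $b=-\star_1 B_i$ from the appendix.
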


\begin{proof}
For any $1$-form $B_i$, Eqn~\ref{locGB} follows from the standard local Gauss-Bonnet theorem, the divergence theorem (i.e. Stokes' theorem), and an argument to approximate $D$ by smooth domains like the one at the end of the proof of the Gauss-Bonnet formula (Theorem 9.3) in \cite{Lee}.

If $\nabla'=\overline{\nabla}-B_i{}^k{}_j$ is a metric connection, then $K_g-\mathrm{d}^*_g(B_i)=\mathrm{Pf}(\Omega_{\nabla'})$ is just Eqn.~\ref{H1} of the appendix. We can also compute
$$g(\nabla'_{\dot{\gamma}}\dot{\gamma}, N)=g(\overline{\nabla}_{\dot{\gamma}}\dot{\gamma}-B_{\dot{\gamma}}\dot{\gamma}, N)=\kappa_N-g(B_{\dot{\gamma}}\dot{\gamma}, N)$$
where in any orthonormal basis, say $\{\dot{\gamma}, N\}$, $B=
[\begin{smallmatrix}
0 & b\\
-b & 0
\end{smallmatrix}]$ as an endomorphism-valued $1$-form, with $b=-\star_1 B_i$ (see Eqn.~\ref{b&B} in the Appendix). Thus $$B_{\dot{\gamma}}\dot{\gamma}=-b(\dot{\gamma})N=-(\star_1 b)(N) N=-B_i(N)N,$$ which yields
$$g(\nabla'_{\dot{\gamma}}\dot{\gamma}, N)=\kappa_N+B_i(N)$$
\end{proof}

\begin{Rmk}
We will not go into the details, but the interested reader may wish to check the following: if one tracks the proof of Gauss-Bonnet given in \cite{Lee} but substitutes a general connection for the Levi-Civita connection and writes this connection as a metric connection plus difference tensor, any contribution due to the difference tensor cancels out of the calculations. One is thereby left with Prop.~\ref{locGB}. It is precisely when the metric connection is the canonical associated metric connection that we are being ``least wasteful'' in what gets cancelled out. 
\end{Rmk}

\begin{Rmk}
We presume that Prop.~\ref{locGB} is not really ``new.'' For instance, Corwin and Morgan in \cite{CorwinMorgan} prove a Gauss-Bonnet theorem on a smooth disk in a ``surface with densities'': $\mathrm{d}s=\delta_1\mathrm{d}s_0$ and $\mathrm{d}A=\delta_2\mathrm{d}A_0$ where the $\delta_i$ are positive density functions. They define $K'=K_g-\Delta\log\delta_1$ and $\kappa=\tfrac{\delta_1}{\delta_2}\kappa_N-\tfrac{1}{\delta_2}g(\nabla\delta_1, N)$ and show that
$$\int_{\gamma} \tfrac{\delta_2}{\delta_1}\kappa \,\mathrm{d}s_0+\int_D K' \,\mathrm{d}A_0=2\pi$$
However, this is clearly equivalent to
$$\int_{\gamma} (\kappa_N-\mathrm{d}(\log\delta_1)(N)) \,\mathrm{d}s_0+\int_D (K_g+\mathrm{d}^*_g(\mathrm{d}\log\delta_1)) \,\mathrm{d}A_0=2\pi$$
since $\Delta=-\mathrm{d}^*_g\mathrm{d}$. This is Prop.~\ref{locGB} when $B_i=-\mathrm{d}(\log\delta_1)$. If we write $-\log\delta_1=f$, then one nice choice of connection $\nabla$ whose canonical metric connection yields this $B_i$ is
$$\nabla=\overline{\nabla}+(m-1)\mathrm{d}f_i\,\delta^k_j+(m-1)\mathrm{d}f_j\,\delta^k_i+(m+1)\overline{\nabla}f^k g_{ij}$$
It's ``nice'' because it's torsion-free and the vanishing of its traceless Ricci tensor (with respect to $g$) is related to the surface being a generalized $m$-quasi-Einstein manifold, a notion which seems to be naturally related to that of ``manifold with density.'' See e.g. \cite{Morgan} and \cite{Case}, and the next section.
\end{Rmk}

\section{Applications \& Explorations}

Given that $\mathrm{Pf}(\Omega-\frac{1}{4}(g^{-1}\nabla g)^2)=\mathrm{Pf}(\Omega_{\nabla^g})$ and $\nabla^g$ is a metric connection, one may legitimately ask why we have bothered to frame Thm.~\ref{RCGB} in terms of general connections. The real reason is that this is the form in which the theorem was discovered, and we prefer it aesthetically. However we also believe there are substantial reasons to frame the theorem this way, as follows.

For one, we may be given a connection with special properties a priori unrelated to a metric and want to explore the topology of our manifold; we can then introduce a metric and use Thm.~\ref{RCGB} to do so. One such instance is discussed in the next subsection. 

Another possibility is that we could have a metric and some other differential geometric data which is not obviously subordinate to the metric. It is not clear in such a scenario that one should default to the Levi-Civita connection, or any particular metric connection for that matter; perhaps some other, non-metric, connection is better adapted to the situation, or can suggest natural geometric quantities to study.  If so, Thm.~\ref{RCGB} can be used to explore the relation between the geometric data and the Euler characteristic. A potential example of this is discussed in the second subsection below.

\subsection{Chern's Conjecture on the Euler Characteristic of Affine Manifolds}

An advantage of phrasing Thm.~\ref{RCGB} in terms of general connections is that we hope it may find an application in the study of a conjecture of Chern. This conjecture posits that the Euler characteristic of a closed manifold with a torsion-free flat connection on its tangent bundle should have vanishing Euler characteristic (see \cite{Martinez} for an overview). If the connection is metric then the conjecture is true, even without the torsion-free assumption, by the Chern-Gauss-Bonnet theorem. The conjecture is also true in dimension two by Milnor's inequality \cite{Milnor}, again without the torsion-free assumption. However, examples of Smillie in all even dimensions greater than two (\cite{Smillie}, \cite{Goldman}) show that in general the torsion-free assumption is necessary for the truth of the conjecture. 

By Thm.~\ref{RCGB}, it suffices to show that $\mathrm{Pf}((g^{-1}\nabla g)^2)$ integrates to $0$ for any Riemannian metric $g$ when $\nabla$ is a torsion-free flat connection; or equivalently to show that $\mathrm{Pf}((g^{-1}\nabla g)^2)$ is exact. The author has made attempts at this in dimension two without noteworthy success. Perhaps instead of working with an arbitrary metric $g$, one should try to be more discerning and choose a $g$ adapted in some way to $\nabla$. Imposing $\nabla g=0$ is too strong, but what other equations are natural? One obvious idea based on the concepts discussed here is to demand that the metric $g$ minimize the distance between $\nabla$ and $\nabla^g$ globally, or what amounts to the same, minimize the magnitude of $\nabla g$. This suggests minimizing a scale-invariant functional like
$$\mathcal{F}_{\nabla}(g)=\int_{M^n} |\nabla g|^n\,\mathrm{d}V_g$$
or minimizing, say, the functional
$$\mathcal{K}_{\nabla}(g)=\int_{M^n} |\nabla g|^2\,\mathrm{d}V_g$$
subject to the constraint $\int_{M^n} \mathrm{d}V_g=1$. (Note that $\mathcal{F}_{\nabla}=\mathcal{K}_{\nabla}$ when $n=2$, so we don't need the volume constraint when using $\mathcal{K}_{\nabla}$ on a surface; is this possibly related to not needing the torsion-free assumption when $n=2$?) Perhaps the minima of these functionals, if they exist, will be well-adapted to the problem at hand. We will not, however, discuss these ideas any further here, and merely hope that they may prove fruitful in future efforts on this conjecture.

\subsection{Natural Connections Incompatible with a Given Metric: Generalized $m$-Quasi-Einstein Manifolds}

In this subsection, we will discuss a situation in which a metric $g$ occurs alongside other differential geometric data, making the introduction of a connection $\nabla$ incompatible with $g$ plausibly natural.

\subsubsection{Generalized $m$-quasi-Einstein Manifolds}

A \emph{generalized $m$-quasi-Einstein manifold} \cite{Barros} is a Riemannian manifold such that
\begin{equation}\label{GmQE}
r_g+\tfrac{1}{2}\pounds_{\varphi^{\#}} g-\tfrac{1}{m}\varphi\otimes \varphi=\lambda g
\end{equation}
where $r_g$ is the Ricci tensor of $g$, $\varphi\in\Omega^1(M^n)$, $\pounds$ denotes the Lie derivative, $m$ is an extended real number, and $\lambda\in C^{\infty}(M^n)$. To understand the significance of this equation, it helps to look at some special cases. When $m=0$ we can understand, by convention, that the equation simplifies to $r_g=\lambda g$, which also occurs when $\varphi=0$ for any $m$; this is an Einstein manifold when $n\ge 3$. When $m=\pm\infty$ we have $\frac{1}{m}=0$, so if $\lambda$ is constant, Eqn.~\ref{GmQE} becomes the equation of a Ricci soliton, a notion which occurs in the study of singularities of the Ricci flow \cite{Cao}. In the case $m=-(n-2)$, the equation describes an Einstein-Weyl manifold (compare to the equation on page 100 of \cite{PedSwann}), a notable concept in conformal geometry. Finally, when $\varphi=\mathrm{d}f$, $m$ is a non-zero integer, and $\lambda$ is constant, Eqn.~\ref{GmQE} is related to constructing Einstein warped products and conformal Einstein metrics, depending on $m$ (see \cite{Case} for more information and references).

As a Riemannian geometer working with Eqn.~\ref{GmQE}, it is tempting to default to using the Levi-Civita connection $\nabla_g$, which perhaps gives undue primacy to the metric $g$ and places $\varphi$ in an undeservedly subordinate role. We propose that one way to unify these two objects which places them on more equal footing is to consider a connection $\nabla$ which depends on both quantities and generates Eqn.~\ref{GmQE} in an appropriate sense. Namely, we demand that Eqn.~\ref{GmQE} be equivalent to $\mathring{r}_{(ij)}=0$, where $\mathring{r}_{()}$ is the traceless (with respect to $g$) symmetric part of the Ricci tensor of $R=\Omega(\nabla)$. (Our perspective here is strongly influenced by the philosophy, inherited from E. Cartan and espoused in \cite{Sharpe}, that differential geometry is the study of connections on principal bundles.)

\subsubsection{The Connection Ansatz}

What would this connection $\nabla$ look like? Answering this question comes down to determining the difference tensor $\nabla-\nabla_g$. It's easy to see based on Eqn.~\ref{GmQE} that this three-index difference tensor should be a tensor algebraic in $g$ and $\varphi$. Denoting the connection coefficients of $\nabla$ by $\Gamma_i{}^k{}_j$, and the Christoffel symbols of the Levi-Civita connection $\nabla_g$ by $\overline{\Gamma}_i{}^k{}_j$, our ansatz is
\begin{equation}\label{ansatz}
\Gamma_i{}^k{}_j=\overline{\Gamma}_i{}^k{}_j+a\varphi_i \delta^k_j+b\varphi_j \delta^k_i+c\varphi^k g_{ij}
\end{equation}

\noindent The torsion of $\nabla$ vanishes if and only if $a=b$ but, without a full geometric understanding of the issue, we prefer to leave open the possibility that torsion may play some role.

The formula for the traceless Ricci tensor of this connection is (see the Appendix)
\begin{align*}
\mathring{r}_{()}=&\,\overline{r}-\tfrac{1}{2}((n-1)b+c)\pounds_{\varphi^{\#}} \,g+((n-1)b^2-c^2)\varphi\otimes \varphi+\\
&-\tfrac{1}{n}(\overline{s}+((n-1)b+c)\mathrm{d}^*_g\varphi+((n-1)b^2-c^2)|\varphi|^2)g
\end{align*}

\noindent From this we see clearly that Eqn~\ref{GmQE} is identical to $\mathring{r}_{()}=0$ provided
$$(n-1)b+c=-1$$
$$(n-1)b^2-c^2=-\tfrac{1}{m}$$
or equivalently
$$c=-1-(n-1)b$$
$$(n-1)(n-2)b^2+2(n-1)b+(1-\tfrac{1}{m})=0$$
When $n=2$, these equations have the unique solution
$$b=-\tfrac{1}{2}(1-\tfrac{1}{m})$$
$$c=-\tfrac{1}{2}(1+\tfrac{1}{m})$$
When $n>2$, the discriminant of the quadratic satisfied by $b$ is $\Delta=4(n-1)(1+\tfrac{n-2}{m})$ so only when $m=-(n-2)$ is there a unique solution, given by
$$b=-\tfrac{1}{n-2}$$
$$c=\tfrac{1}{n-2}$$
When $a=b$, so that torsion vanishes, this is precisely the case in which the manifold is an Einstein-Weyl manifold, and the connection $\nabla$ is torsion-free and preserves the conformal class of the metric $g$.

Otherwise, there is no unique solution and given Eqn.~\ref{GmQE} we can construct \emph{two} torsion-free connections with our ansatz such that Eqn~\ref{GmQE} is equivalent to $\mathring{r}_{()}=0$. The explicit solutions are
$$b=\tfrac{1}{n-2}\big(-1\pm\sqrt{\tfrac{m+n-2}{m(n-1)}}\,\big)$$
$$c=-\tfrac{1}{n-2}\big(-1\pm\sqrt{\tfrac{(m+n-2)(n-1)}{m}}\,\big)$$
Intriguingly, the radical in the expression for $b$ (which is equivalent to $b+c$, see below) seems to occur in the integration of certain differential equations when constructing a class of quasi-Einstein manifolds; see the proof of Theorem 5.7 in \cite{Case}.

Despite the non-uniqueness, there is one particularly noteworthy case which has been singled-out previously. This occurs when $a=b$ and $c=0$, which identifies the torsion-free connections among the family \ref{ansatz} which are projectively equivalent to (i.e. have the same unparametrized geodesics as) the Levi-Civita connection. From our system of equations for $b$, $c$, and $m$ we find the values $b=-\tfrac{1}{n-1}$ and $m=-(n-1)$. This was studied by Wylie and Yeroshkin in \cite{WylieYer}, primarily in the case where $\varphi$ is an exact form. Their interest seems to have been driven by the fact that when $a=b=-\tfrac{1}{n-1}$, $c=0$, and $\varphi$ is closed, the Ricci tensor of the torsion-free connection $\nabla$ is symmetric and particularly simple, satisfying (see the appendix)
$$r=r_g+\tfrac{1}{2}\pounds_{\varphi^{\#}}g+\tfrac{1}{n-1}\varphi\otimes\varphi$$
This is one of the so-called Bakry-\'{E}mery Ricci tensors.

We should also note that the family \ref{ansatz} with $a=b$ and $\varphi$ an exact form was noted in \cite{LiXia} along with a computation of the Ricci tensor, but no connection was made to $m$-quasi-Einstein manifolds. (The author considered the connections \ref{ansatz} for the reasons outlined above before becoming aware of \cite{WylieYer} or \cite{LiXia}.)

We hope that this brief introduction might interest others in investigating the connections defined by Eqn.~\ref{ansatz} and what role they might play, if any, in the study of quasi-Einstein manifolds. It's possible that they are only noteworthy in the cases already studied (that is, the torsion-free cases in which the connection preserves at least the conformal class of the metric, or is projectively equivalent to the Levi-Civita connection), but the general case is likely worthy of further scrutiny. One possible point of entry to evaluating the significance of the connection $\nabla$ when it is torsion-free, given the attention already afforded to the conformal and projective special cases, is to write,
\begin{align*}
\Gamma_i{}^k{}_j=\,\,&\overline{\Gamma}_i{}^k{}_j-c\varphi_i \delta^k_j-c\varphi_j \delta^k_i+c\varphi^k g_{ij}\\
&+(b+c)\varphi_i \delta^k_j+(b+c)\varphi_j \delta^k_i
\end{align*}
where we can easily compute
$$b+c=\mp\sqrt{\tfrac{m+n-2}{m(n-1)}}$$
This realizes the connection $\nabla$ as being projectively equivalent to a connection 
$$\nabla'=\nabla_g-c\varphi_i \delta^k_j-c\varphi_j \delta^k_i+c\varphi^k g_{ij}$$
which preserves the conformal class of $g$, and such that the $1$-form which determines the rescaling of the metric $g$ under $\nabla'$-parallel transport is a constant multiple of the $1$-form which determines the reparametrization of the $\nabla'$-geodesics. When $\varphi=\mathrm{d}f$, we have
$$\nabla=\nabla_{e^{-2cf}g}\mp\sqrt{\tfrac{m+n-2}{m(n-1)}}(\partial_i f \delta^k_j+\partial_j f \delta^k_i)$$
which seems to single out the conformal metric $e^{-2cf}g$ for consideration. (Though, recall that there are really \emph{two} values of $c$ which are roots of a quadratic, so we have conformal \emph{metrics} to consider; this suggests that additionally the midpoint $\tfrac{1}{n-2}$, and the corresponding conformal metric $e^{\frac{-2f}{n-2}}g$, may be worthy of consideration. This is perhaps further suggested by the fact that if one writes $\mathring{r}_{(ij)}=0$ in terms of $\tilde{g}=e^{\frac{-2f}{n-2}}g$ quantities, then second derivatives of $f$ don't appear explicitly.)

The author has not fully pursued these ideas in favor of applying Thm.~\ref{RCGB} with the connections in Eqn.~\ref{ansatz} to investigate the Euler characteristic of generalized $m$-quasi-Einstein $4$-manifolds; this will appear in a future work. We encourage others to take up the general line of thought in this section, and have provided in the appendix an extensive compendium of formulae as an aid to those interested.

\section{Appendix}

\subsection{Relations Between Curvatures} On a general vector bundle with connections $\nabla$ and $\nabla'$ related by $\nabla=\nabla'-B$, we can easily calculate that the relation
\begin{equation}\label{Curvatures}
\Omega_{\nabla}=\Omega_{\nabla'}-\nabla'\circ B+B\wedge B
\end{equation}
exists between the curvatures. Note that $\nabla'\circ B=\mathrm{d}B+\omega'\wedge B+B\wedge\omega'$ denotes the $\nabla'$-covariant derivative of $B$ as an $\mathrm{End}(E)$-valued $1$-form; thus it is an $\mathrm{End}(E)$-valued $2$-form. Specializing to $E=TM$ one computes that
$$(\nabla'\circ B)_{ij}{}^k{}_l=\nabla'_i B_j{}^k{}_l-\nabla'_j B_i{}^k{}_l+T(\nabla')_i{}^m{}_j \,B_m{}^k{}_l$$
where $T(\nabla')=\omega'_i{}^k{}_j-\omega'_j{}^k{}_i$ is the torsion of $\nabla'$.

Suppose now that $(M, g)$ is a Riemannian manifold, $\tilde{\nabla}$ is a metric connection on the tangent bundle, and $\overline{\nabla}=\nabla_g$ is the Levi-Civita connection. Write $\tilde{\nabla}=\overline{\nabla}-B$, and denote the curvatures by $H=\Omega(\tilde{\nabla})$, $\overline{R}=\Omega(\overline{\nabla})$. Then the formula above reads
$$H_{ij}{}^k{}_l=\overline{R}_{ij}{}^k{}_l-\overline{\nabla}_i B_j{}^k{}_l+\overline{\nabla}_j B_i{}^k{}_l+B_i{}^k{}_m B_j{}^m{}_l-B_j{}^k{}_m B_i{}^m{}_l$$
We can convert this into a formula with all indices down by defining $H_{ijkl}=g_{km}H_{ij}{}^m{}_l$, $\overline{R}_{ijkl}=g_{km}\overline{R}_{ij}{}^m{}_l$, and $B_{ikj}=g_{km}B_i{}^m{}_j$ yielding
\begin{equation}\label{curvs}
H_{ijkl}=\overline{R}_{ijkl}-\overline{\nabla}_i B_{jkl}+\overline{\nabla}_j B_{ikl}+B_{ikm} B_{jml}-B_{jkm} B_{iml}
\end{equation}
where we're using an extended Einstein summation convention such that summation over repeated lower indices is understood and denotes contraction with the inverse metric.

\subsubsection{On the Tangent Bundle of a Surface}
Consider now Eqn~\ref{curvs} on a surface, where we work in a local orthonormal basis $\{e_1, e_2\}$. We have only the single scalar equation
$$H_{1212}=\overline{R}_{1212}-\overline{\nabla}_1 B_{212}+\overline{\nabla}_2 B_{112}+B_{11m} B_{2m2}-B_{21m} B_{1m2}$$
Considering the skew-symmetry of $B$ in the second and third indices (since it's the difference between metric connections), each of the last two terms vanishes, so this is in fact just
$$H_{1212}=K_g-\overline{\nabla}_1 B_{212}+\overline{\nabla}_2 B_{112}$$
by also substituting in the Gauss curvature $K_g$ of $g$. Now we trace $-\overline{\nabla}_i B_{jkl}+\overline{\nabla}_j B_{ikl}$ over $i, k$ and $j, l$ to find
$$-\overline{\nabla}_i B_{jij}+\overline{\nabla}_j B_{iij}=-\overline{\nabla}_1 B_{212}+\overline{\nabla}_2 B_{112}-\overline{\nabla}_2 B_{121}+\overline{\nabla}_1 B_{221}=2(-\overline{\nabla}_1 B_{212}+\overline{\nabla}_2 B_{112})$$
by using the skew-symmetry of $B$, while also 
$$-\overline{\nabla}_i B_{jij}+\overline{\nabla}_j B_{iij}=2\overline{\nabla}_i B_{jji}=-2\,\mathrm{d}^*_g(B_{jji})=-2\,\mathrm{d}^*_g(B_i)$$ by again using skew-symmetry and renaming dummy indices in the first equality, defining $B_{jji}=B_i$ in the last, and where we are denoting by $\mathrm{d}^*_g$ the formal $L^2$-adjoint of $\mathrm{d}$ which is induced by $g$; \,$\mathrm{d}^*_g\psi=-\star_2\mathrm{d}\star_1\psi=-\nabla_i \psi_i$ on $1$-forms, where $\star_i$ is the Hodge star operator on $i$-forms. Thus
\begin{equation}\label{H1}
H_{1212}=K_g-\mathrm{d}^*_g(B_i)
\end{equation}

Alternatively we can proceed from Eqn.~\ref{Curvatures}
$$H=\bar{R}-\nabla'\circ B+B\wedge B=\bar{R}-(\mathrm{d}B+\bar{\Gamma}\wedge B+B\wedge\bar{\Gamma})+B\wedge B$$
in our orthonormal frame $\{e_1, e_2\}$, in which case
$$\bar{\Gamma}=
\begin{bmatrix}
0 & \bar{\gamma}\\
-\bar{\gamma} & 0
\end{bmatrix}, \hspace{.1in}
B=
\begin{bmatrix}
0 & b\\
-b & 0
\end{bmatrix}$$
where $\bar{\gamma}$ is a locally-defined $1$-form and $b=\mathrm{Pf}(B)$ is a globally-defined $1$-form if $M^2$ is oriented and our frame is consistent with the orientation, but $b$ generally has an ambiguous sign. It's trivial to compute that  $\bar{\Gamma}\wedge\bar{\Gamma}$, $B\wedge B$, and the (super)commutator $\bar{\Gamma}\wedge B+B\wedge\bar{\Gamma}$ vanish, so
\begin{align*}
H&=\bar{R}-\mathrm{d}B\\
&=\mathrm{d}\bar{\Gamma}-\mathrm{d}B\\
&=\begin{bmatrix}
0 & \mathrm{d}\bar{\gamma}-\mathrm{d}b\\
-(\mathrm{d}\bar{\gamma}-\mathrm{d}b) & 0
\end{bmatrix}\\
&=\begin{bmatrix}
0 & K_g\mathrm{d}V_g-\mathrm{d}b\\
-(K_g\mathrm{d}V_g-\mathrm{d}b) & 0
\end{bmatrix}
\end{align*}
Comparing to Eqn~\ref{H1} reveals $$\mathrm{d}b=\mathrm{d}^*_g(B_i)\mathrm{d}V_g;$$
this reflects, and is a consequence of, the easily computed fact that 
\begin{equation}\label{b&B}
b=-\star_1 B_i \hspace{.1in}\Leftrightarrow\hspace{.1in} \star_1 b=B_i
\end{equation}
which sheds more light on our earlier observation that $b$ has an ambiguous local sign.

In any case, putting this all together,
\begin{align*}
\mathrm{Pf}(H)&=H_{1212}\,e^1\wedge e^2\\
&=(K_g-\mathrm{d}^*_g(B_i))\mathrm{d}V_g
\end{align*}

\subsection{The Curvature of the Levi-Civita Connection Plus a Pure-Trace Tensor}

Let $\overline{\Gamma}_i{}^k{}_j$ denotes the Christoffel symbols of the Levi-Civita connection $\nabla_g$ of a metric $g$. We consider a connection $\nabla$ whose coefficients are given by
$$\Gamma_i{}^k{}_j=\overline{\Gamma}_i{}^k{}_j+\alpha_i \delta^k_j+\beta_j \delta^k_i+\gamma^k g_{ij}$$
where $\alpha$, $\beta$, $\gamma$ are $1$-forms. We will record various quantities associated to this connection.

\subsubsection{Torsion, Non-Metricity, Canonical Metric Connection}

The torsion is
$$T(\nabla)=(\alpha-\beta)_i \delta^k_j-(\alpha-\beta)_j \delta^k_i$$
 which clearly vanishes exactly when $\alpha=\beta$,
while the non-metricity is
$$\nabla_i g_{jk}=-2\alpha_i g_{jk}-(\beta+\gamma)_j g_{ik}-(\beta+\gamma)_k g_{ij}$$
which is easily shown to vanish if and only if $\alpha=0$ and $\beta=-\gamma$, and to be proportional to the metric $g_{jk}$ exactly when $\beta=-\gamma$; this latter situation is equivalent to $\nabla$ preserving the conformal class of $g$. 

It's additionally immediate from the non-metricity tensor that the canonical associated metric tensor is
$$\nabla^g=\nabla_g-B$$
where 
$$B_i{}^k{}_j=-\tfrac{(\beta-\gamma)_j}{2}\delta^k_i+\tfrac{(\beta-\gamma)^k}{2}g_{ij}$$
or (with indices lowered)
$$B_{ikj}=-\tfrac{(\beta-\gamma)_j}{2}g_{ik}+\tfrac{(\beta-\gamma)_k}{2}g_{ij}$$
This tensor has one independent trace,
$$B_j=B_{iij}=-\tfrac{n-1}{2}(\beta-\gamma)_j$$

\subsubsection{Curvature of $\nabla$}

The curvature of $\nabla$ is
\begin{align*}
R_{ij}{}^k{}_l=\,&\overline{R}_{ij}{}^k{}_l+(\partial_i \alpha_j-\partial_j \alpha_i)\delta^k_l+\overline{\nabla}_i \beta_l \delta^k_j-\overline{\nabla}_j \beta_l \delta^k_i +\overline{\nabla}_i \gamma^k g_{jl}-\overline{\nabla}_j \gamma^k g_{il}\\
&+(\delta^k_i \beta_j-\delta^k_j \beta_i)\beta_l+(\gamma_i g_{jl}-\gamma_j g_{il})\gamma^k+g(\beta, \gamma)(\delta^k_i g_{jl}-\delta^k_j g_{il})
\end{align*}
with index-lowered variant $R_{ijkl}=g_{km}R_{ij}{}^m{}_l$ given by
\begin{align*}
R_{ijkl}=&\overline{R}_{ijkl}+(\partial_i \alpha_j-\partial_j \alpha_i)g_{kl}+\overline{\nabla}_i \beta_l g_{kj}-\overline{\nabla}_j \beta_l g_{ki} +\overline{\nabla}_i \gamma_k g_{jl}-\overline{\nabla}_j \gamma_k g_{il}\\
&+(g_{ik} \beta_j-g_{jk} \beta_i)\beta_l+(\gamma_i g_{jl}-\gamma_j g_{il})\gamma_k+g(\beta, \gamma)(g_{ik} g_{jl}-g_{jk} g_{il})
\end{align*}
Since $\nabla$ is not necessarily torsion-free or metric, the curvature tensor lacks the familiar symmetries of a Riemannian curvature tensor, and there are three independent traces of $R$, not the usual one: the Ricci tensor $r_{jl}=R_{ij}{}^i{}_l$, $\rho_{jk}=R_{ijki}$, and $\zeta_{ij}=R_{ij}{}^k{}_k$

We first examine the Ricci tensor:
\begin{align*}
r=&\,\,\overline{r}+\mathrm{d}\alpha-\tfrac{1}{2}\mathrm{d}((n-1)\beta+\gamma)-\tfrac{1}{2}\pounds_{((n-1)\beta+\gamma)^{\#}} \,g\\
&+(n-1)\beta\otimes \beta-\gamma\otimes \gamma+(-\mathrm{d}^*_g(\gamma)+g((n-1)\beta+\gamma, \gamma))g
\end{align*}
Clearly the symmetric part of $r$ is
$$r_{()}=\overline{r}-\tfrac{1}{2}\pounds_{((n-1)\beta+\gamma)^{\#}} \,g+(n-1)\beta\otimes \beta-\gamma\otimes \gamma+(-\mathrm{d}^*_g(\gamma)+g((n-1)\beta+\gamma, \gamma))g$$
and the antisymmetric part is
$$r_{[\,]}=\mathrm{d}\alpha-\tfrac{1}{2}\mathrm{d}((n-1)\beta+\gamma)$$
with trace $s=g^{ij}r_{ij}=g^{ij}r_{(ij)}$ satisfying
$$s=\overline{s}+(n-1)\mathrm{d}^*_g(\beta-\gamma)+(n-1)(|\beta|^2+n\,g(\beta, \gamma)+|\gamma|^2)$$
Therefore the following equation holds for the traceless symmetric Ricci tensor $\mathring{r}_{()}=r_{()}-\tfrac{s}{n}g$:
\begin{align*}
\mathring{r}_{()}=&\,\,\overline{r}-\tfrac{1}{2}\pounds_{((n-1)\beta+\gamma)^{\#}} \,g+(n-1)\beta\otimes \beta-\gamma\otimes \gamma+\\
&-\tfrac{1}{n}(\overline{s}+\mathrm{d}^*_g((n-1)\beta+\gamma)+(n-1)|\beta|^2-|\gamma|^2)g
\end{align*}

Now consider the tensor $\rho$:
\begin{align*}
\rho=&-\overline{r}-\mathrm{d}\alpha-\tfrac{1}{2}\mathrm{d}(\beta+(n-1)\gamma)-\tfrac{1}{2}\pounds_{(\beta+(n-1)\gamma)^{\#}} \,g\\
&+\beta\otimes\beta-(n-1)\gamma\otimes\gamma+(-\mathrm{d}^*_g\beta-g(\beta, \beta+(n-1)\gamma))g
\end{align*}
The symmetric part is
$$\rho_{()}=-\overline{r}-\tfrac{1}{2}\pounds_{(\beta+(n-1)\gamma)^{\#}} \,g+\beta\otimes\beta-(n-1)\gamma\otimes\gamma+(-\mathrm{d}^*_g\beta-g(\beta, \beta+(n-1)\gamma))g$$
while the antisymmetric part is
$$\rho_{[\,]}=-\mathrm{d}\alpha-\tfrac{1}{2}\mathrm{d}(\beta+(n-1)\gamma)$$
and the trace $\sigma=g^{ij}\rho_{ij}=g^{ij}\rho_{(ij)}$ is fully determined by the trace of the Ricci tensor:
$$\sigma=-s$$
Thus for the traceless symmetric part we have
\begin{align*}
\mathring{\rho}_{()}=&\,-\overline{r}-\tfrac{1}{2}\pounds_{(\beta+(n-1)\gamma)^{\#}} \,g+\beta\otimes\beta-(n-1)\gamma\otimes\gamma\\
&-\tfrac{1}{n}(-\overline{s}+\mathrm{d}^*_g(\beta+(n-1)\gamma)+|\beta|^2-(n-1)|\gamma|^2)g
\end{align*}

Some interesting simplifications occur if we consider $\tfrac{1}{2}(r-\rho)$ and $\tfrac{1}{2}(r+\rho)$ (these are natural as traces of $R$ after breaking the $kl$ index pair into antisymmetric and symmetric parts). The traces are $s$ and $0$ respectively, while the other components are as follows:
\begin{align*}
\tfrac{1}{2}(\mathring{r}-\mathring{\rho})_{()}=&\,\,\overline{r}+\tfrac{n-2}{2}(-\tfrac{1}{2}\pounds_{(\beta-\gamma)^{\#}}g+\beta\otimes\beta+\gamma\otimes\gamma)\\
&-\tfrac{1}{n}(\overline{s}+\tfrac{n-2}{2}(\mathrm{d}^*_g(\beta-\gamma)+|\beta|^2+|\gamma|^2))g\\
\tfrac{1}{2}(r-\rho)_{[\,]}=&\,\,\mathrm{d}\alpha-\tfrac{n-2}{4}\mathrm{d}(\beta-\gamma)\\
\tfrac{1}{2}(r+\rho)_{()}=&\,\tfrac{1}{2}(\mathring{r}+\mathring{\rho})_{()}\\
=&-\tfrac{n}{4}\pounds_{(\beta+\gamma)^{\#}}g+\tfrac{n}{2}(\beta+\gamma)\cdot (\beta-\gamma)-\tfrac{1}{2}(\mathrm{d}^*_g(\beta+\gamma)+g(\beta+\gamma, \beta-\gamma))g\\
\tfrac{1}{2}(r+\rho)_{[\,]}=&-\tfrac{n}{4}\mathrm{d}(\beta+\gamma)
\end{align*}

Finally we have the antisymmetric tensor $\zeta$:
$$\zeta=n\mathrm{d}\alpha+2\mathrm{d}(\beta+\gamma)$$

\subsubsection{Curvature of $\nabla^g$}
The canonical metric connection of $\nabla$ is $\nabla^g$, whose Christoffel symbols $\eta_i{}^k{}_j$ satisfy (see Section 7.1)
$$\eta_i{}^k{}_j=\overline{\Gamma}_i{}^k{}_j+\tfrac{(\beta-\gamma)_j}{2}\delta^k_i-\tfrac{(\beta-\gamma)^k}{2}g_{ij}$$
This is the same type of connection that we considered above, with $\alpha=0$ and $\beta$ and $-\gamma$ replaced by $\tfrac{1}{2}(\beta-\gamma)$.
Denote the curvature of $\eta$ by $H=H_{ijkl}$. This tensor is antisymmetric in the $ij$ and $kl$ pairs since $\nabla^g$ is compatible with $g$, but we can't expect further symmetries. This is, however, enough to imply that there is only one independent trace; we'll use the Ricci trace over the $ik$ indices and call it $h$. We then have the formulas that follow.

\hspace{.1in}

\noindent (1,3)-Curvature Tensor:
\begin{align*}
H_{ij}{}^k{}_l=\,&\overline{R}_{ij}{}^k{}_l+\tfrac{1}{2}\overline{\nabla}_i (\beta-\gamma)_l \delta^k_j-\tfrac{1}{2}\overline{\nabla}_j (\beta-\gamma)_l \delta^k_i -\tfrac{1}{2}\overline{\nabla}_i (\beta-\gamma)^k g_{jl}+\tfrac{1}{2}\overline{\nabla}_j (\beta-\gamma)^k g_{il}\\
&+\tfrac{1}{4}(\delta^k_i (\beta-\gamma)_j-\delta^k_j (\beta-\gamma)_i)(\beta-\gamma)_l+\tfrac{1}{4}((\beta-\gamma)_i g_{jl}-(\beta-\gamma)_j g_{il})(\beta-\gamma)^k\\
&-\tfrac{1}{4}|\beta-\gamma|^2(\delta^k_i g_{jl}-\delta^k_j g_{il})
\end{align*}
(0,4)-Curvature Tensor:
\begin{align*}
H_{ijkl}=&\overline{R}_{ijkl}+\tfrac{1}{2}\overline{\nabla}_i (\beta-\gamma)_l g_{kj}-\tfrac{1}{2}\overline{\nabla}_j (\beta-\gamma)_l g_{ki} -\tfrac{1}{2}\overline{\nabla}_i (\beta-\gamma)_k g_{jl}+\tfrac{1}{2}\overline{\nabla}_j (\beta-\gamma)_k g_{il}\\
&+\tfrac{1}{4}g\owedge[(\beta-\gamma)\otimes(\beta-\gamma)]_{ijkl}-\tfrac{1}{2}|\beta-\gamma|^2 g\owedge g_{ijkl}
\end{align*}
Ricci tensor:
\begin{align*}
h=\,\,&\overline{r}-\tfrac{n-2}{4}\mathrm{d}(\beta-\gamma)-\tfrac{n-2}{4}\pounds_{(\beta-\gamma)^{\#}}g\\
&+\tfrac{n-2}{4}(\beta-\gamma)\otimes (\beta-\gamma)+(\tfrac{1}{2}\mathrm{d}^*_g(\beta-\gamma)-\tfrac{n-2}{4}|\beta-\gamma|^2)g
\end{align*}
Symmetric Ricci tensor:
\begin{align*}
h_{()}=\,\,&\overline{r}-\tfrac{n-2}{4}\pounds_{(\beta-\gamma)^{\#}}g+\tfrac{n-2}{4}(\beta-\gamma)\otimes (\beta-\gamma)\\
&+(\tfrac{1}{2}\mathrm{d}^*_g(\beta-\gamma)-\tfrac{n-2}{4}|\beta-\gamma|^2)g
\end{align*}
Antisymmetric Ricci tensor:
$$h_{[\,]}=-\tfrac{n-2}{4}\mathrm{d}(\beta-\gamma)$$
Trace:
$$\tau=\overline{s}+(n-1)\mathrm{d}^*_g(\beta-\gamma)-\tfrac{(n-1)(n-2)}{4}|\beta-\gamma|^2$$
Traceless symmetric Ricci tensor:
\begin{align*}
\mathring{h}_{()}=\,\,&\overline{r}-\tfrac{n-2}{4}\pounds_{(\beta-\gamma)^{\#}}g+\tfrac{n-2}{4}(\beta-\gamma)\otimes (\beta-\gamma)\\
&-\tfrac{1}{n}(\overline{s}+\tfrac{n-2}{2}\mathrm{d}^*_g(\beta-\gamma)+\tfrac{n-2}{4}|\beta-\gamma|^2)g
\end{align*}

\subsubsection{Specializing the equations}
If we substitute $\alpha=a\varphi$, $\beta=b\varphi$, $\gamma=c\varphi$ we obtain further specializations:

\hspace{.1in}

\noindent{\bf $\nabla$ tensors:}

\hspace{.1in}

\noindent (1,3)-Curvature Tensor:
\begin{align*}
R_{ij}{}^k{}_l=\,&\overline{R}_{ij}{}^k{}_l+a(\partial_i \varphi_j-\partial_j \varphi_i)\delta^k_l+b\overline{\nabla}_i \varphi_l \delta^k_j-b\overline{\nabla}_j \varphi_l \delta^k_i +c\overline{\nabla}_i \varphi^k g_{jl}-c\overline{\nabla}_j \varphi^k g_{il}\\
&+b^2(\delta^k_i \varphi_j-\delta^k_j \varphi_i)\varphi_l+c^2(\varphi_i g_{jl}-\varphi_j g_{il})\varphi^k+bc |\varphi|^2(\delta^k_i g_{jl}-\delta^k_j g_{il})
\end{align*}
(0,4)-Curvature Tensor:
\begin{align*}
R_{ijkl}=&\overline{R}_{ijkl}+a(\partial_i \varphi_j-\partial_j \varphi_i)g_{kl}+b\overline{\nabla}_i \varphi_l g_{kj}-b\overline{\nabla}_j \varphi_l g_{ki} +c\overline{\nabla}_i \varphi_k g_{jl}-c\overline{\nabla}_j \varphi_k g_{il}\\
&+b^2(g_{ik} \varphi_j-g_{jk} \varphi_i)\varphi_l+c^2(\varphi_i g_{jl}-\varphi_j g_{il})\varphi_k+bc |\varphi|^2(g_{ik} g_{jl}-g_{jk} g_{il})
\end{align*}
Ricci tensor:
\begin{align*}
r=&\,\,\overline{r}+a\mathrm{d}\varphi-\tfrac{1}{2}((n-1)b+c)\mathrm{d}\varphi-((n-1)b+c)\tfrac{1}{2}\pounds_{\varphi^{\#}} \,g\\
&+((n-1)b^2-c^2)\varphi\otimes \varphi+c(-\mathrm{d}^*_g\varphi+((n-1)b+c)|\varphi|^2)g
\end{align*}
Symmetric Ricci tensor:
$$r_{()}=\overline{r}-((n-1)b+c)\tfrac{1}{2}\pounds_{\varphi^{\#}} \,g+((n-1)b^2-c^2)\varphi\otimes \varphi+c(-\mathrm{d}^*_g\varphi+((n-1)b+c)|\varphi|^2)g$$
Antisymmetric Ricci tensor:
$$r_{[\,]}=a\mathrm{d}\varphi-\tfrac{1}{2}((n-1)b+c)\mathrm{d}\varphi$$
Trace of Ricci tensor:
\begin{align*}
s&=(\overline{s}+((n-1)b+c)\mathrm{d}^*_g\varphi+((n-1)b^2-c^2)|\varphi|^2)+n\cdot c(-\mathrm{d}^*_g\varphi+((n-1)b+c)|\varphi|^2)\\
&=\overline{s}+(n-1)(b-c)\mathrm{d}^*_g\varphi+(n-1)(b^2+nbc+c^2)|\varphi|^2
\end{align*}
Traceless symmetric Ricci tensor:
\begin{align*}
\mathring{r}_{()}=&\,\overline{r}-\tfrac{1}{2}((n-1)b+c)\pounds_{\varphi^{\#}} \,g+((n-1)b^2-c^2)\varphi\otimes \varphi+\\
&-\tfrac{1}{n}(\overline{s}+((n-1)b+c)\mathrm{d}^*_g\varphi+((n-1)b^2-c^2)|\varphi|^2)g
\end{align*}
$\rho$ tensor:
\begin{align*}
\rho=&-\overline{r}-a\mathrm{d}\varphi-\tfrac{1}{2}(b+(n-1)c)\mathrm{d}\varphi-\tfrac{1}{2}(b+(n-1)c)\pounds_{\varphi^{\#}} \,g\\
&+(b^2-(n-1)c^2)\varphi\otimes\varphi+b(-\mathrm{d}^*_g\varphi-(b+(n-1)c)|\varphi|^2)g
\end{align*}
Symmetric $\rho$:
$$\rho_{()}=-\overline{r}-\tfrac{1}{2}(b+(n-1)c)\pounds_{\varphi^{\#}} \,g+(b^2-(n-1)c^2)\varphi\otimes\varphi+b(-\mathrm{d}^*_g\varphi-(b+(n-1)c)|\varphi|^2)g$$
Antisymmetric $\rho$:
$$\rho_{[\,]}=-a\mathrm{d}\varphi-\tfrac{1}{2}(b+(n-1)c)\mathrm{d}\varphi$$
Trace of $\rho$:
$$\sigma=-s$$
Traceless symmetric $\rho$:
\begin{align*}
\mathring{\rho}_{()}=&\,-\overline{r}-\tfrac{1}{2}(b+(n-1)c)\pounds_{\varphi^{\#}} \,g+(b^2-(n-1)c^2)\varphi\otimes\varphi\\
&-\tfrac{1}{n}(-\overline{s}+(b+(n-1)c)\mathrm{d}^*_g\varphi+(b^2-(n-1)c^2)|\varphi|^2)g
\end{align*}
Linear combinations of $r$ and $\rho$:
\begin{align*}
\tfrac{1}{2}(\mathring{r}-\mathring{\rho})_{()}=&\,\,\overline{r}+\tfrac{n-2}{2}(-(b-c)\tfrac{1}{2}\pounds_{\varphi^{\#}}g+(b^2+c^2)\varphi\otimes\varphi)\\
&-\tfrac{1}{n}(\overline{s}+\tfrac{n-2}{2}((b-c)\mathrm{d}^*_g\varphi+(b^2+c^2)|\varphi|^2)g\\
\tfrac{1}{2}(r-\rho)_{[\,]}=&\,\,a\mathrm{d}\varphi-\tfrac{n-2}{4}(b-c)\mathrm{d}\varphi\\
\tfrac{1}{2}(r+\rho)_{()}=&\,\tfrac{1}{2}(\mathring{r}+\mathring{\rho})_{()}\\
=&(b+c)(-\tfrac{n}{4}\pounds_{\varphi^{\#}}g+\tfrac{n}{2}(b-c)\varphi\otimes \varphi-\tfrac{1}{2}(\mathrm{d}^*_g\varphi+(b-c)|\varphi|^2)g)\\
\tfrac{1}{2}(r+\rho)_{[\,]}=&-\tfrac{n}{4}(b+c)\mathrm{d}\varphi
\end{align*}
Antisymmetric $\zeta$ tensor:
$$\zeta=n\cdot a\, \mathrm{d}\varphi+2(b+c)\mathrm{d}\varphi$$

\vspace{.1in}

\noindent{\bf $\nabla^g$ tensors:}

\vspace{.1in}

\noindent (1,3)-Curvature Tensor:
\begin{align*}
H_{ij}{}^k{}_l=\,&\overline{R}_{ij}{}^k{}_l+\tfrac{b-c}{2}(\overline{\nabla}_i \varphi_l \delta^k_j-\overline{\nabla}_j \varphi_l \delta^k_i -\overline{\nabla}_i \varphi^k g_{jl}+\overline{\nabla}_j \varphi^k g_{il})\\
&+\tfrac{(b-c)^2}{4}(\delta^k_i \varphi_j-\delta^k_j \varphi_i)\varphi_l+\tfrac{(b-c)^2}{4}(\varphi_i g_{jl}-\varphi_j g_{il})\varphi^k\\
&-\tfrac{(b-c)^2}{4}|\varphi|^2(\delta^k_i g_{jl}-\delta^k_j g_{il})
\end{align*}
(0,4)-Curvature Tensor:
\begin{align*}
H_{ijkl}=&\overline{R}_{ijkl}+\tfrac{b-c}{2}(\overline{\nabla}_i \varphi_l g_{kj}-\overline{\nabla}_j \varphi_l g_{ki} -\overline{\nabla}_i \varphi_k g_{jl}+\overline{\nabla}_j \varphi_k g_{il})\\
&+\tfrac{(b-c)^2}{4}g\owedge[\varphi\otimes\varphi]_{ijkl}-\tfrac{(b-c)^2}{2}|\varphi|^2 g\owedge g_{ijkl}
\end{align*}
Ricci tensor:
\begin{align*}
h=\,\,&\overline{r}-\tfrac{n-2}{4}(b-c)\mathrm{d}\varphi-\tfrac{n-2}{4}(b-c)\pounds_{\varphi^{\#}}g\\
&+\tfrac{n-2}{4}(b-c)^2\varphi\otimes \varphi+(\tfrac{b-c}{2}\mathrm{d}^*_g\varphi-\tfrac{n-2}{4}(b-c)^2|\varphi|^2)g
\end{align*}
Symmetric Ricci tensor:
\begin{align*}
h_{()}=\,\,&\overline{r}-\tfrac{n-2}{4}(b-c)\pounds_{\varphi^{\#}}g+\tfrac{n-2}{4}(b-c)^2\varphi\otimes \varphi\\
&+(\tfrac{b-c}{2}\mathrm{d}^*_g\varphi-\tfrac{n-2}{4}(b-c)^2|\varphi|^2)g
\end{align*}
Antisymmetric Ricci tensor:
$$h_{[\,]}=-\tfrac{n-2}{4}(b-c)\mathrm{d}\varphi$$
Trace:
$$\tau=\overline{s}+(n-1)(b-c)\mathrm{d}^*_g\varphi-\tfrac{(n-1)(n-2)}{4}(b-c)^2|\varphi|^2$$
Traceless symmetric Ricci tensor:
\begin{align*}
\mathring{h}_{()}=\,\,&\overline{r}-\tfrac{n-2}{4}(b-c)\pounds_{\varphi^{\#}}g+\tfrac{n-2}{4}(b-c)^2\varphi\otimes \varphi\\
&-\tfrac{1}{n}(\overline{s}+\tfrac{n-2}{2}(b-c)\mathrm{d}^*_g\varphi+\tfrac{n-2}{4}(b-c)^2|\varphi|^2)g
\end{align*}

\subsubsection{Assuming $a=b$, $(n-1)b+c=-1$, $(n-1)b^2-c^2=-\tfrac{1}{m}$} If we assume $a=b$, $(n-1)b+c=-1$, and $(n-1)b^2-c^2=-\tfrac{1}{m}$, then when $n>2$ we have two solutions $(b_+, c_+)$ and $(b_-, c_-)$, given explicitly by
$$b_{\pm}=\tfrac{1}{n-2}\big(-1\pm\sqrt{\tfrac{m+n-2}{m(n-1)}}\,\big)$$
$$c_{\pm}=-\tfrac{1}{n-2}\big(-1\pm\sqrt{\tfrac{(m+n-2)(n-1)}{m}}\,\big)$$
This produces two torsion-free connections
$$\nabla_+=\nabla_g+b_+\varphi_i \delta^k_j+b_+\varphi_j \delta^k_i+c_+\varphi^k g_{ij}$$
$$\nabla_-=\nabla_g+b_-\varphi_i \delta^k_j+b_-\varphi_j \delta^k_i+c_-\varphi^k g_{ij}$$
with identical and particularly simple traceless symmetric Ricci tensors
$$\mathring{r}_{\pm}{}_{()}=\overline{r}+\tfrac{1}{2}\pounds_{\varphi^{\#}} \,g-\tfrac{1}{m}\varphi\otimes \varphi-\tfrac{1}{n}(\overline{s}-\mathrm{d}^*_g\varphi-\tfrac{1}{m}|\varphi|^2)g$$
despite the fact that their symmetric Ricci tensors differ due to an explicit dependence on $c$:
$$r_{\pm}{}_{()}=\overline{r}+\tfrac{1}{2}\pounds_{\varphi^{\#}} \,g-\tfrac{1}{m}\varphi\otimes \varphi+c_{\pm}(-\mathrm{d}^*_g\varphi-|\varphi|^2)g$$
However, if one considers the average of the symmetric Ricci tensors,
$$\tfrac{1}{2}(r_+{}_{()} +r_-{}_{()})=\overline{r}+\tfrac{1}{2}\pounds_{\varphi^{\#}} \,g-\tfrac{1}{m}\varphi\otimes\varphi+\tfrac{1}{n-2}(-\mathrm{d}^*_g\varphi-|\varphi|^2)g$$
(using $\tfrac{1}{2}(c_+ +c_-)=\tfrac{1}{n-2}$) then this does not depend on choosing between $(b_+, c_+)$ and $(b_-, c_-)$. From this we find also
$$\tfrac{1}{2}(s_+ +s_-)=\overline{s}-2\mathrm{d}^*_g\varphi-\tfrac{m+1}{m}|\varphi|^2-\tfrac{2}{n-2}(\mathrm{d}^*_g\varphi+|\varphi|^2)$$
The expression $\overline{s}-2\mathrm{d}^*_g\varphi-\tfrac{m+1}{m}|\varphi|^2$ is an analogue of the notion of weighted scalar curvature in \cite{Case}.

Perhaps studying the averages of these and other curvature tensors will prove worthwhile; it's not hard to see that the quantities $b_+ +b_-$, $c_+ +c_-$, $b_+^2+b_-^2$, $c_+^2+c_-^2$, and $b_+ c_+ +b_- c_-$ don't involve any root extraction, so the averages of curvature tensors won't involve any root expressions. However, it's not entirely clear what underlying principle would lead to the consideration of such averages. We leave such explorations for the interested reader to embark upon.

\section{Acknowledgements} The author would like to thank the Rutgers University, New Brunswick Mathematics Department for their financial support while this work was completed.

\end{document}